\theoremstyle{plain}
   \newtheorem{teor}{Theorem}[section]
   \newtheorem{prop}[teor]{Proposition}
   \newtheorem{lemma}[teor]{Lemma}
\theoremstyle{definition}
\title{Permutation patterns in genome rearrangement problems}
\author{
Giulio Cerbai \\ giuliocerbai14@gmail.com
\and
Luca Ferrari \\ luca.ferrari@unifi.it
}
\institution{Dipartimento di Matematica e Informatica "U. Dini",\\ viale Morgagni 65, University of Firenze, Firenze, Italy}
\begin{document}
\maketitle
\setcounter{page}{124}

\begin{abstract}
In the context of the genome rearrangement problem, we analyze two well known models,
namely the block transposition and the prefix block transposition models,
by exploiting the connection with the notion of permutation pattern.
More specifically,
for any $k$, we provide a characterization of the set of permutations having distance $\leq k$ from the identity
(which is known to be a permutation class) in terms of what we call \emph{generating permutations}
and we describe some properties of its basis, which allow to compute such a basis for small values of $k$.
\end{abstract}
\vskip 32pt

\section{Introduction}

One of the major trends in bioinformatics and biomathematics is the study of the genome rearrangement problem.
Roughly speaking, given a genome, one is interested in understanding how the genome can evolve into another genome.
To give a proper formalization, several models for rearranging a genome have been introduced,
each of which defines a series of allowed elementary operations to be performed on a genome in order to obtain an adjacent one.
For several models, it is possible to define a \emph{distance} between two genomes,
by counting the minimum number of elementary operations needed to transform one genome into the other.
The investigation of the main properties of such a distance becomes then a key point in understanding the main features of the model under consideration.

\bigskip

A common formalization of any such models consists of encoding a genome using a \emph{permutation} (in linear notation)
and describing an elementary operation as a \emph{combinatorial} operation on the entries of such a permutation.
Many genome rearrangement models have been studied under this general framework.
Among them, the following ones are very well known.

\begin{itemize}

\item The \emph{reversal} model consists of a single operation, defined as follows:
a new permutation is obtained from a given one by selecting a cluster of consecutive elements and reversing it.
More formally, given $\pi =\pi_1 \pi_2 \cdots \pi_n$, a reversal is performed by choosing $i<j<n$ and then forming the permutation
$\sigma =\pi_1 \cdots \pi_{i-1} \boxed{\pi_j \pi_{j-1}\cdots \pi_{i+1} \pi_i}\pi_{j+1}\cdots \pi_n$.
This model was introduced in \cite{WEHM}, then studied for instance in \cite{BaPe1,HP}.

\item A variant of the reversal model is the \emph{prefix reversal model},
which is a specialization of the previous one in which the reversal operation can only be performed on a prefix of the given permutation.
This is clearly an easier model to investigate, which is also known as \emph{pancake sorting} (see for instance \cite{GP}).

\item A very popular and studied model is the \emph{transposition model}, see \cite{BaPe2}.
Given a permutatation $\pi =\pi_1 \cdots \pi_n$, a \emph{transposition operation} consists of taking two adjacent clusters of consecutive elements
and interchanging their positions. Formally, one has to choose indices $1\leq i<j<k\leq n+1$, then form the permutation
$\sigma =\pi_1 \cdots \pi_{i-1}\boxed{\pi_j \pi_{j+1}\cdots \pi_{k-1}}\boxed{\pi_i \pi_{i+1}\cdots \pi_{j-1}} \pi_k \cdots \pi_n$.

\item As for the reversal, also for the transposition model there is a ``prefix variant".
In the \emph{prefix transposition model} the leftmost block of elements to interchange is a prefix of the permutation.
Sorting by prefix transposition is studied in \cite{DM}.

\end{itemize}



Independently from the chosen model,
there are some general questions that can be asked in order to gain a better understanding of its combinatorial properties.
First of all, the operations of the model often (but not always) allow to define a \emph{distance} $d$ between two permutations $\rho$ and $\sigma$,
as the minimum number of elementary operations needed to transform $\rho$ into $\sigma$. Moreover, when the operations are nice enough,
the above distance $d$ could even be \emph{left-invariant}, meaning that, given permutations $\pi ,\rho ,\sigma$ (of the same length),
$d(\pi ,\rho )=d(\sigma \pi ,\sigma \rho )$. As a consequence, choosing for instance $\sigma =\rho^{-1}$,
the problem of evaluating the distance $d(\pi ,\rho )$ reduces to that of sorting $\pi$ with the minimum number of elementary allowed operations.
Now, if $d$ is a left-invariant distance on the set $S_n$ of all permutations of the same length, define the \emph{$k$-ball} of $S_n$ to be the set
$B_k ^{(d)}(n)=\{ \rho \in S_n \; |\; d(\rho ,id_n )\leq k\}$, where $id_n$ is the identity permutation of length $n$.
The following questions are quite natural to ask:
\begin{itemize}
\item compute the diameter of $B_k ^{(d)}(n)$, i.e. the maximum distance between two permutations of $B_k ^{(d)}(n)$;
\item compute the diameter of $S_n$, i.e. the maximum distance between two permutations of $S_n$;
\item characterize the permutations of $\partial B_k ^{(d)}(n)$, i.e. the permutations of $B_k ^{(d)}(n)$ having maximum distance from the identity;
\item characterize the permutations of $\partial S_n$, i.e. the permutations of $S_n$ having maximum distance from the identity;
\item characterize and enumerate the permutations of $B_k ^{(d)}(n)$;
\item design sorting algorithms and study the related complexity issues.
\end{itemize}

In the literature there are several results, concerning several evolution models, which give some insight into the above problems.
Our work starts from the observation that, in many cases, the balls $B_k ^{(d)}(n)$ can be characterized in terms of \emph{pattern avoidance}.
Recall that, given two permutation $\sigma \in S_k$ and $\tau =\tau_1 \tau_2 \cdots \tau_n \in S_n$, with $k\leq n$,
we say that $\sigma$ is a \emph{pattern} of $\tau$ when there exist $1\leq i_1 <i_2 <\cdots <i_k \leq n$ such that
$\tau_{i_1}\tau_{i_2}\cdots \tau_{i_k}$ as a permutation is isomorphic to $\sigma$ (which means that
$\tau_{i_1}, \tau_{i_2}, \ldots , \tau_{i_k}$ are in the same relative order as the elements of $\sigma$).
This notion of pattern in permutation defines an obvious partial order, and the resulting poset is known as the \emph{permutation pattern poset}.
When $\sigma$ is not a pattern of $\tau$, we say that $\tau$ \emph{avoids} $\sigma$.
A down-set $I$ (also called a permutation class) of the permutation pattern poset can be described in terms of its minimal excluded permutations
(or, equivalently, the minimal elements of the complementary up-set): these permutations are called the \emph{basis} of $I$.
The idea of studying the balls $B_k ^{(d)}(n)$ in terms of pattern avoidance is not new.
As far as we know, the first model which has been investigated from this point of view is the (whole) tandem duplication-random loss model:
Bouvel and Rossin \cite{BR} have in fact shown that, in such a model, the ball $B_k ^{(d)}=\bigcup_{n\geq 0}B_k ^{(d)}(n)$ is a class of pattern avoiding permutations,
whose basis is the set of minimal permutations having $d$ descents (here minimal is intended in the permutation pattern order).
Subsequent works \cite{BoPe,BF,CGM} have been done concerning the enumeration of the basis permutations of such classes.
More recently, Homberger and Vatter \cite{HV} described an algorithm for the enumeration of any polynomial permutation class,
which can be fruitfully used for all the above mentioned distances, since the resulting balls are indeed polynomial classes.
However, their results do not allow to find information on the basis of the classes.

In the present work we try to enhance what have been obtained in \cite{HV} in two directions.
First, we aim at giving a structural characterization of the balls for some of the above distances,
thus complementing the results in \cite{HV}, which is more concerned with computational issues.
Second, we provide some insight on the properties of the bases of such balls, hoping to gain a better understanding of them.
We will be mainly concerned with the block transposition and the prefix block transposition models,
leaving the reversal models to a future paper.

\bigskip

Some of the results of the present work are contained in the MSc thesis of the first author \cite{C}.


\section{Block transposition}

Among the four above mentioned models, the block transposition one is probably the hardest to investigate.

Denoting with $td$ the transposition distance, the permutation class $B_k ^{(td)}$ can be described in terms of its \emph{generating permutations}.

A \emph{strip} of $\pi =\pi_1 \pi_2 \cdots \pi_n \in S_n$ is a maximal consecutive substring $\pi_i \cdots \pi_{i+k-1}$ such that,
for all $j=1,\ldots, i+k-2$, $\pi_{j+1}=\pi_j +1$.

A permutation $\pi$ is said to be \emph{plus irreducible} \cite{AS} when, for all $i=1,\ldots ,n-1$, $\pi_{i+1}\neq \pi_i +1$.
In other words, $\pi$ is a plus irreducible permutation when it does not have points that are adjacent both in positions and values, with values increasing.
Equivalently, a permutation is plus irreducible if and only if all of its strips have length $1$.

Any permutation $\pi$ can be associated with a plus irreducible permutation, denoted $red(\pi )$,
which is obtained by replacing each strip of $\pi$ with its minimum element, then suitably rescaling the resulting word.
For instance, if $\pi =435612789$, then $red(\pi )=32415$.
It is easy to observe that $red(\pi )\leq \pi$ in the permutation pattern order.
Moreover, for every permutation $\pi$, we have that $td(\pi )=td(red(\pi ))$.

Given $\pi \in S_n$, let $v_1 ,\ldots ,v_n$ be nonnegative integers.
The \emph{monotone inflation} of $\pi$ through $v=(v_1 ,\ldots ,v_n )$ is the permutation $\pi [v]=\pi [id_{v_1},\ldots ,id_{v_n}]$
obtained from $\pi$ by replacing each element $\pi_i$ of $\pi$ with the identity permutation $id_{v_i}$ of length $i$ suitably rescaled,
so to mantain the relative order of the elements of $\pi$.
So, for instance, if $\pi =41352$ and $v=(0,2,1,3,2)$, we have $\pi [v]=\underbrace{\ldots}_4\underbrace{12}_1\underbrace{5}_3\underbrace{678}_5\underbrace{34}_2$.
In the following we will denote with $MI(\pi )$ the set of all monotone inflations of a permutation $\pi$
and with $MI(C)$ the set $\bigcup_{\pi \in C}MI(\pi )$, for a given set of permutations $C$.
The notion of monotone inflation is clearly related to that of geometric grid class \cite{AABRV}.
More specifically, given a $\{ -1,0,1\}$-matrix $M$ and denoting with $Geom(M)$ the geometric grid class of permutations determined by $M$,
if $\pi$ is a permutation and $M_\pi$ is its permutation matrix, then it is not difficult to realize that:
\begin{enumerate}
\item $Geom(M_\pi )=Geom(M_{red(\pi )})$;
\item $MI(\pi )=Geom(M_\pi )$;
\item $MI(\pi )=MI(red(\pi ))$.
\end{enumerate}

Now define a permutation $\pi$ to be \emph{generating} for $B_k ^{(td)}=\bigcup_{n\geq 0} B_k ^{(td)}(n)$ when
it is a maximal plus irreducible permutation of $B_k ^{(td)}$.
The set of all generating permutations for $B_k ^{(td)}$ will be called the \emph{generating set} of $B_k ^{(td)}$.
We thus have the following fact, whose easy proof is omitted.
\begin{prop}
For every $k$, $B_k ^{(td)}=\bigcup \{ MI(\pi )\, |\, \pi \textnormal{ is generating for $B_k ^{(td)}$} \}$.
\end{prop}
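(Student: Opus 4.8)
The plan is to prove the two inclusions separately, using only the facts already recorded: $red(\pi)\le\pi$ and $td(\pi)=td(red(\pi))$ in the permutation pattern order, the identities $MI(\pi)=MI(red(\pi))=Geom(M_\pi)$, and the fact that $B_k^{(td)}$ is a permutation class (a down-set). A recurring subclaim will be that, when $\pi$ is plus irreducible, the strips of a monotone inflation $\pi[v]$ are exactly the nonempty inflated blocks: two consecutive blocks cannot fuse into a longer strip, since that would require $\pi_{i+1}=\pi_i+1$, contradicting plus irreducibility, and each nonempty block is itself a single increasing run. Hence $red(\pi[v])$ is obtained by collapsing each nonempty block to its minimum, and it equals the subpermutation of $\pi$ read on the positions $i$ with $v_i\ge 1$; in particular $red(\pi[v])\le\pi$. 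Dually, $\sigma$ is recovered from $red(\sigma)$ by inflating each point into the (monotone) strip it came from, so $\sigma\in MI(red(\sigma))$ for every permutation $\sigma$.

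For the inclusion $\supseteq$, let $\pi$ be generating and $\sigma\in MI(\pi)$. By the subclaim $red(\sigma)\le\pi$, so $red(\sigma)\in B_k^{(td)}$ because $B_k^{(td)}$ is a down-set containing $\pi$; therefore $td(\sigma)=td(red(\sigma))\le k$, i.e. $\sigma\in B_k^{(td)}$. For the inclusion $\subseteq$, let $\sigma\in B_k^{(td)}$. Then $red(\sigma)$ is plus irreducible with $td(red(\sigma))=td(\sigma)\le k$, so $red(\sigma)$ is a plus irreducible permutation of $B_k^{(td)}$, and $\sigma\in MI(red(\sigma))$. Two things then remain: first, that $red(\sigma)\le\pi$ for some \emph{maximal} plus irreducible $\pi$ of $B_k^{(td)}$, i.e. for some generating $\pi$; and second, that $\rho\le\pi$ implies $MI(\rho)\subseteq MI(\pi)$. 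The latter is immediate: if $\tau=\rho[w]$ and $\rho$ occurs in $\pi$ on a set $S$ of positions, then assigning the entries of $w$ to the positions in $S$ and $0$ to the others exhibits $\tau$ as a monotone inflation of $\pi$ (equivalently, $M_\rho$ is a submatrix of $M_\pi$, whence $Geom(M_\rho)\subseteq Geom(M_\pi)$). Combining, $\sigma\in MI(red(\sigma))\subseteq MI(\pi)$, as wanted.

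The one step carrying genuine content is the existence of a generating $\pi$ above $red(\sigma)$, which amounts to showing that plus irreducible permutations of $B_k^{(td)}$ cannot be arbitrarily long, so that in the resulting finite poset each of them lies below a maximal one. For this I would invoke the classical breakpoint lower bound for sorting by transpositions \cite{BaPe2}: a plus irreducible permutation of length $n$ has at least $n-1$ breakpoints, and a single block transposition changes the number of breakpoints by at most $3$, so its transposition distance is at least $(n-1)/3$; hence every plus irreducible permutation of $B_k^{(td)}$ has length at most $3k+1$. This is also precisely the bound that later makes the generating set, and thus the basis, effectively computable for small $k$. Everything else in the argument is definition-chasing, consistent with the remark that this proposition admits an easy proof.
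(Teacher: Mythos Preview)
The paper omits the proof of this proposition entirely, so there is no original argument to compare against; your write-up is correct and fills in exactly the kind of routine verification the authors had in mind. Both inclusions are handled properly: the key identities $\sigma\in MI(red(\sigma))$ and $red(\pi[v])\le\pi$ for $\pi$ plus irreducible are justified, and the monotonicity $\rho\le\pi\Rightarrow MI(\rho)\subseteq MI(\pi)$ is the right tool for the remaining step.

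One remark on presentation rather than correctness: to guarantee that every plus irreducible element of $B_k^{(td)}$ lies below a \emph{maximal} one, you invoke the breakpoint bound $td(\pi)\ge(n-1)/3$ to cap the length at $3k+1$. This is perfectly valid, but note that in the paper this bound only appears later, inside the proof of Theorem~\ref{genblock}. So your proof of the ``easy'' proposition already anticipates an ingredient the authors develop afterwards. That is not a flaw --- the breakpoint argument is standard and independent of everything else --- but it explains why the authors may have regarded the proposition as obvious: once one accepts that the generating set is finite (or simply takes ``maximal'' to be well-defined, as they implicitly do), the rest is, as you say, definition-chasing.
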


A very natural description of the balls $B_k ^{(td)}$ is then provided by its generating set.
This is our first open problem.

\bigskip

\textbf{Open problem.}\quad \emph{Characterize the generating permutations of $B_k ^{(td)}$, for every $k$}.

\bigskip

For instance, it is easy to realize that $B_1 ^{(td)}=MI(1324)$.
In the following, we will provide a structural description of the generating permutations of $B_k ^{(td)}$ for a generic $k$.

Our approach is based on the observation that a generating permutation for $B_k ^{(td)}$ is a plus irreducible permutation,
and so it will be convenient to work inside the poset of plus irreducible permutations,
seen as a subposet of the classical permutation pattern poset
(notice that this is also a subposet of the poset of \emph{peg permutations}, as defined in \cite{HV}).
In passing, we remark that the enumeration of plus irreducible permutations is well known:
denoting with $f_n$ the number of plus irreducible permutations of length $n+1$, we have the recurrence relation
$$
f_n=nf_{n-1}+(n-1)f_{n-2},
$$
for $n\geq 2$.
With initial conditions $f_0 =f_1 =1$, we get for $(f_n )_{n\geq 0}$ the exponential generating function $\frac{e^{-x}}{(1-x)^2}$
and the closed form $f_n =\sum_{k=0}^{n}(-1)^k (n+1-k)\frac{n!}{k!}$. This is sequence A000255 in \cite{S}, see also \cite{AAB}.

\bigskip

Suppose $\pi =\pi_1 \pi_2 \cdots \pi_n$ is a plus irreducible permutation of length $n$ in the generating set of $B_k ^{(td)}$. Inflate $\pi$ by choosing three (not necessarily distinct) indices $1\leq i\leq j\leq k\leq n$
and replacing $\pi_i ,\pi_j$ and $\pi_k$ by strips of suitable lengths, as follows:
\begin{itemize}
\item if the three indices are all distinct, take strips of length 2;
\item if two of the indices are equal, take the associated strip of length 3;
\item if all indices are equal, take a strip of length 4.
\end{itemize}

If $I$ is the multiset of the selected indices, the resulting permutation will be denoted $\pi_I$.
Now observe that, in all of the above cases, there exists a unique block transposition $\tau_I$ that breaks all the new strips in such a way that, in the resulting permutation, each pair of adjacent elements of a new strip becomes either nonadjacent or adjacent in the reverse way; more specifically, $\tau_I$ is the transposition with indices $i+1,j+2,k+3$. We call $\tilde{\pi}_I$ the permutation obtained from $\tilde{\pi}_I$ by applying $\tau_I$.
As an example, consider the permutation $\pi =1324$, and the multiset of indices $I=\{ 2,2,4 \}$;
then we get $\pi_I =1345267$ and $\tilde{\pi}_I =1352647$.

The following lemma gives some basic properties of the above described construction that will be useful in the sequel. The proof is easy and so left to the reader.

\begin{lemma}\label{prel}
Let $\pi =\pi_1 \cdots \pi_n$ be a plus irreducible permutation of length $n$
and $I$ a multiset of indices of $\pi$ of cardinality 3.
Then $\tilde{\pi}_I$ is a plus irreducible permutation of length $n+3$;
moreover, if $\pi_1 =1$ and $\pi_n =n$, then $(\tilde{\pi}_I )_1 =1$ and $(\tilde{\pi}_I )_{n+3} =n+3$.
\end{lemma}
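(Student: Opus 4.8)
The statement is purely about the combinatorics of the inflation-then-transpose construction, so I would argue directly by examining how $\tau_I$ acts on the strips that were created. Recall $\pi$ is plus irreducible of length $n$, we pick a multiset $I$ of three indices $1\le i\le j\le k\le n$, form $\pi_I$ of length $n+3$ by replacing the entries in positions $i,j,k$ by increasing strips (lengths $2,2,2$, or $2,3$, or $4$, according to the coincidence pattern of $i,j,k$), and then apply the block transposition $\tau_I$ with cut points $i+1,j+2,k+3$ to get $\tilde\pi_I$. Note $\tilde\pi_I$ has length $n+3$ automatically, since inflation and block transposition both preserve length; the content of the lemma is the plus-irreducibility and the two endpoint conditions.

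\textbf{Step 1: locate the strips of $\pi_I$.} First I would observe that, since $\pi$ is plus irreducible, the only consecutive-and-consecutive-in-value adjacencies in $\pi_I$ are those \emph{inside} the newly inserted strips; no new adjacency is created at the boundary between an inserted strip and its neighbours, nor between two old entries (their values got spread apart by the inflation but their relative order is unchanged, and $\pi$ had no such adjacency to begin with). So the strips of $\pi_I$ are exactly the inserted blocks (of total inserted length $3$ beyond $n$), plus the singleton old entries.

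\textbf{Step 2: check $\tau_I$ destroys every inserted adjacency.} This is the heart of the matter and the step I expect to be the main (though still routine) obstacle: a careful bookkeeping of indices. The block transposition $\tau_I$ with cut points $i+1<j+2<k+3$ in $\pi_I$ swaps the block in positions $[i+1,j+1]$ with the block in positions $[j+2,k+2]$. One checks case by case (three distinct indices / two equal / all equal) that each cut point $i+1$, $j+2$, $k+3$ falls \emph{strictly inside} one of the three inserted strips, splitting it into two nonempty parts; and that after the swap, the two halves of each split strip are either separated by other material or placed in reversed order, so the ``$+1$ in both position and value'' adjacency is broken. Simultaneously one verifies that the swap does not \emph{create} a new such adjacency: the elements brought next to each other across the three new seams are, by the choice of cut points, not consecutive in value (this is exactly why the indices $i+1,j+2,k+3$ — rather than $i,j+1,k+2$ or anything else — are the right ones). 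Hence $\tilde\pi_I$ has all strips of length $1$, i.e. it is plus irreducible.

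\textbf{Step 3: endpoints.} Suppose now $\pi_1=1$ and $\pi_n=n$. If $i>1$ then position $1$ of $\pi_I$ still holds the value $1$ and is untouched by $\tau_I$ (whose leftmost cut point is $i+1\ge 3$), so $(\tilde\pi_I)_1=1$; if $i=1$, the first entry of the leading strip is the value $1$, the cut point $i+1=2$ lies after it, and $\tau_I$ moves the block starting at position $2$ but leaves position $1$ fixed, so again $(\tilde\pi_I)_1=1$. Symmetrically, $\pi_n=n$ inflates to a strip (or a singleton) whose last entry is the largest value $n+3$, sitting in position $n+3$; the rightmost cut point $k+3=n+3$ means $\tau_I$ acts on positions $\le n+2$, leaving position $n+3$ with value $n+3$ fixed, so $(\tilde\pi_I)_{n+3}=n+3$. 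This completes the proof.

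Overall, no deep idea is needed; the only place to be careful is the index arithmetic in Step 2, where one must confirm in each of the three coincidence cases that the cut points $i+1,j+2,k+3$ are interior to the three inserted strips and that the resulting permutation has no surviving or freshly created length-$\ge 2$ strip.
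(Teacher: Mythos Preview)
The paper does not actually prove this lemma: it states that ``the proof is easy and so left to the reader.'' Your plan is exactly the natural way to fill in that omitted argument---locate the strips of $\pi_I$, check that the cut points $i+1,\,j+2,\,k+3$ fall inside them so that $\tau_I$ destroys every inserted adjacency without creating a new one, and then verify the endpoint conditions by observing that $\tau_I$ never touches position $1$ or position $n+3$. This is correct and matches the spirit of what the authors had in mind.

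One small wording slip in Step~3: you write ``the rightmost cut point $k+3=n+3$,'' but this equality holds only when $k=n$; when $k<n$ the last entry of $\pi_I$ is the uninflated singleton $n+3$ sitting in position $n+3$, and the rightmost cut point is $k+3<n+3$. The conclusion is unaffected (in both cases $\tau_I$ acts only on positions $\le k+2\le n+2$), so this is cosmetic rather than a gap.
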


We are now ready to give an explicit description of the generating set of the ball $B_k ^{(td)}$.
Such a result will be preceded by a technical proposition (stated without proof) which gives a recipe
to recursively construct the set of permutations which are obtainable by means of a single block transposition.
In the proof of the next theorem we also need the definition of breakpoint,
which can be found for instance in \cite{FLRTV}.
Given a permutation $\pi =\pi_1 \cdots \pi_n$, a \emph{breakpoint} of $\pi$ is an integer
$i\in \{ 0,1,\ldots n\}$ such that $\pi_{i+1}\neq \pi_{i}+1$.
By convention, $0$ and $n$ are breakpoints whenever $\pi_1 \neq 1$ and $\pi_n \neq n$, respectively.

\begin{prop}\label{plusone}
Let $\mathcal{I}(n)$ be the set of all multisets of cardinality 3 of $\{ 1,2,\ldots n\}$.
For every plus irreducible permutation $\pi \in S_n$, denote with $MI(\pi )^{+1}$ the set of all permutations
which can be obtained with a single block transposition from any permutation of $MI(\pi )$. Then
$$
MI(\pi )^{+1}=\bigcup_{I\in \mathcal{I}(n)}MI(\tilde{\pi}_I ).
$$
\end{prop}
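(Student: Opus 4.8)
The plan is to prove the two inclusions separately. For the inclusion $\bigcup_{I\in\mathcal{I}(n)}MI(\tilde{\pi}_I)\subseteq MI(\pi)^{+1}$, I would start with an arbitrary $I\in\mathcal{I}(n)$ and an arbitrary $\sigma\in MI(\tilde{\pi}_I)$. The key observation is that $\sigma$ is a monotone inflation of $\tilde{\pi}_I$, and $\tilde{\pi}_I$ is obtained from $\pi_I$ by the single block transposition $\tau_I$ with indices $i+1,j+2,k+3$ (where $I=\{i\le j\le k\}$). Since $\pi_I\in MI(\pi)$ by construction (it is $\pi$ with three of its entries replaced by monotone strips), I would argue that the block transposition $\tau_I$ commutes appropriately with monotone inflation: applying $\tau_I$ to $\pi_I$ and then inflating is the same as first inflating $\pi_I$ to a suitable monotone inflation $\rho\in MI(\pi)$ and then applying an appropriate block transposition to $\rho$. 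Concretely, given the inflation vector producing $\sigma$ from $\tilde{\pi}_I$, one reads off an inflation vector producing some $\rho\in MI(\pi)$ such that a single block transposition on $\rho$ yields $\sigma$; this is a bookkeeping argument on how the strip-lengths and block boundaries transform under $\tau_I$.

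For the reverse inclusion $MI(\pi)^{+1}\subseteq\bigcup_{I}MI(\tilde{\pi}_I)$, I would take $\sigma$ obtained from some $\rho\in MI(\pi)$ by a single block transposition with indices $a<b<c$. The idea is that $\rho = \pi[v]$ for some inflation vector $v$, so $\rho$ is partitioned into $n$ monotone blocks corresponding to the entries of $\pi$. The cut points $a,b,c$ of the block transposition fall inside (or at the boundaries of) at most three of these $n$ blocks; let $i\le j\le k$ be the indices of the blocks of $\pi$ that are ``touched'' by the cuts, and set $I=\{i,j,k\}$ as a multiset. I would then show that $\sigma$ is a monotone inflation of $\tilde{\pi}_I$: outside the touched blocks $\sigma$ just repeats $\pi$'s structure with monotone strips, while inside the touched blocks the transposition either leaves a monotone run (absorbed into an inflation), or splits a block at exactly the place that corresponds to one of the length-$2$, length-$3$, or length-$4$ strips of $\pi_I$ being broken by $\tau_I$. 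This is where Lemma~\ref{prel} is used to guarantee $\tilde{\pi}_I$ is still plus irreducible, so that $MI(\tilde{\pi}_I)$ is well-defined and the identification $\sigma\in MI(\tilde{\pi}_I)$ makes sense.

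The main obstacle I expect is the careful case analysis in the reverse inclusion when the cut points $a,b,c$ lie in the interior of blocks versus on block boundaries, and when two or three of them fall in the same block of $\rho$ — these are exactly the cases that force strips of length $3$ or $4$ in the definition of $\pi_I$, and one must check that the block transposition on $\rho$ really does correspond to the canonical transposition $\tau_I$ (up to the parts absorbed by monotone inflation) rather than to some other operation. A related subtlety is that the same $\sigma$ may arise from several choices of $\rho$ and of cuts, so the multiset $I$ is not uniquely determined; this is harmless for proving the set equality but needs to be acknowledged. Finally, one should check the degenerate boundary behavior (e.g.\ when a cut coincides with position $0$ or $n$, or when $\pi$ does not begin with $1$ or end with $n$) so that no monotone inflation on the extremes is lost; the hypotheses and conclusions of Lemma~\ref{prel} handle the relevant part of this.
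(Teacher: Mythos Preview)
The paper explicitly states this proposition \emph{without proof} (``a technical proposition (stated without proof)''), so there is no argument in the paper to compare against.

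That said, your plan is sound and is the natural one: the two-inclusion strategy, with the forward inclusion handled by the observation that inflating $\tilde{\pi}_I$ and then ``un-doing'' the canonical transposition $\tau_I$ yields an element of $MI(\pi)$, and the reverse inclusion handled by locating the three cut points of an arbitrary block transposition on $\rho=\pi[v]$ inside the $n$ monotone blocks of $\rho$. Your identification of the case analysis (cuts interior to a block versus on a block boundary, and two or three cuts landing in the same block) as the main technical content is exactly right, and this is presumably what the authors regarded as routine enough to omit.

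One minor correction: you invoke Lemma~\ref{prel} to ensure that $MI(\tilde{\pi}_I)$ is ``well-defined'', but $MI(\cdot)$ is defined for any permutation, plus irreducible or not. The plus irreducibility of $\tilde{\pi}_I$ is not needed for the set equality in the proposition itself; it matters only later (in Theorem~\ref{genblock}) to conclude that the $\tilde{\pi}_I$ are themselves generating permutations. Similarly, your worry about the boundary behaviour when $\pi$ does not start with $1$ or end with $n$ is not really an issue for this proposition: the statement holds for any plus irreducible $\pi$, and the hypothesis $\pi_1=1$, $\pi_n=n$ in Lemma~\ref{prel} is only used downstream.
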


\begin{teor}\label{genblock}
For every $k\geq 1$, the generating set of $B_k ^{(td)}$ is the set of all plus irreducible permutations
of length $3k+1$ and having distance $k$ from the identity.
\end{teor}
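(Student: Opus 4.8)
The plan is to prove the two inclusions separately: (i) every plus irreducible permutation of length $3k+1$ with transposition distance exactly $k$ is generating for $B_k^{(td)}$, and (ii) every generating permutation of $B_k^{(td)}$ has length $3k+1$ and distance $k$. The key combinatorial constraint driving everything is that a single block transposition changes the number of breakpoints of a permutation by at most $3$: if $\pi$ has $b(\pi)$ breakpoints and $\sigma$ is obtained from $\pi$ by one block transposition, then $b(\sigma)\ge b(\pi)-3$. Since the identity of length $n$ has $0$ breakpoints while any plus irreducible $\pi\in S_n$ with $\pi_1\neq 1$ or $\pi_n\neq n$ has a controlled number of breakpoints, one gets a lower bound $td(\pi)\ge \lceil b(\pi)/3\rceil$-type inequality, and in particular a plus irreducible permutation of length $n$ has at most roughly $n$ breakpoints, forcing $n\le 3k+1$ once $td(\pi)\le k$. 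This is the standard breakpoint lower bound for sorting by transpositions, and I would invoke it via the reference \cite{FLRTV} rather than reprove it.

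For inclusion (i), I would argue that a plus irreducible $\pi$ of length $3k+1$ with $td(\pi)=k$ cannot be extended: if $\rho$ is plus irreducible, lies in $B_k^{(td)}$, and $\pi\lneq\rho$, then $\rho$ has length $\ge 3k+2$, but then the breakpoint bound forces $td(\rho)\ge k+1$, a contradiction. Hence $\pi$ is maximal plus irreducible in $B_k^{(td)}$, i.e. generating. Here I need that length strictly increases along $\lneq$ among plus irreducible permutations — which is immediate — and that $\pi$ itself is in $B_k^{(td)}$, which is the hypothesis $td(\pi)=k$. Conversely, for inclusion (ii), let $\pi$ be generating for $B_k^{(td)}$. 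By the breakpoint bound its length is at most $3k+1$ and $td(\pi)\le k$. The real content is to show length $=3k+1$ and $td(\pi)=k$ exactly, i.e. that a generating permutation is never "too small". This is where Lemma~\ref{prel} and the inflation-then-transpose construction $\pi\mapsto\tilde\pi_I$ enter: starting from any plus irreducible permutation of $B_k^{(td)}$ of length $<3k+1$, I want to produce a strictly larger plus irreducible permutation still in $B_k^{(td)}$, contradicting maximality. Concretely, if $\pi$ has distance $j<k$ (so a sorting sequence of $j$ transpositions exists) and length $<3k+1$, choose a multiset $I$ of three indices and form $\tilde\pi_I$: by Lemma~\ref{prel} it is plus irreducible of length $n+3$, and by construction the single transposition $\tau_I$ turns $\tilde\pi_I$ back into a permutation whose reduction $red$ is $\pi$ (the new strips collapse), so $td(\tilde\pi_I)\le 1+td(\pi)=j+1\le k$; thus $\tilde\pi_I\in B_k^{(td)}$ and $\pi\lneq\tilde\pi_I$ (one checks $\pi\le\tilde\pi_I$ since $\pi$ is a pattern of the inflation and surviving through $\tau_I$), contradicting that $\pi$ is generating. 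Iterating, a generating permutation must have $j=k$ and $n=3k+1$. To close the gap fully one also argues, via Proposition~\ref{plusone}, that the construction $\pi\mapsto\tilde\pi_I$ together with monotone inflation exactly generates $B_{k}^{(td)}$ from $B_{k-1}^{(td)}$, so that every plus irreducible permutation in $B_k^{(td)}\setminus B_{k-1}^{(td)}$ of maximal length is of the form $\tilde\pi_I$ for some generating $\pi$ of $B_{k-1}^{(td)}$ of length $3(k-1)+1$, and length $3k+1$ follows inductively.

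The main obstacle I anticipate is the tightness in inclusion (ii): showing that a generating permutation cannot have length strictly less than $3k+1$. The inequality $td(\pi)\le k$ and the breakpoint upper bound on length come for free, but maximality of $\pi$ has to be leveraged delicately — one must exhibit, for a hypothetical short generating $\pi$, an actual plus irreducible permutation strictly above it and still within distance $k$, and verify both that the new permutation is genuinely larger in the pattern order (not merely longer) and that its transposition distance has not jumped past $k$. The construction $\tilde\pi_I$ is tailored precisely for this — the transposition $\tau_I$ with indices $i+1,j+2,k+3$ is engineered so that applying it collapses all the inserted strips — so the argument reduces to the bookkeeping checks in Lemma~\ref{prel} and the observation that $red(\tau_I(\tilde\pi_I))=\pi$, giving $td(\tilde\pi_I)\le td(\pi)+1$. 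I would present this as the crux and keep the breakpoint lower bound and the pattern-order monotonicity as cited or routine facts.
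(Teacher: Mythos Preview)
Your direction (i) is clean and actually slightly sharper than the paper's: you observe directly that any plus irreducible $\rho$ of length $\ge 3k+2$ has at least $3k+1$ breakpoints, so $td(\rho)\ge\lceil(3k+1)/3\rceil=k+1$, whence no plus irreducible strictly above a length-$(3k+1)$ permutation can lie in $B_k^{(td)}$. The paper instead deduces this direction as a corollary of the inductive description of $B_k^{(td)}$, so your argument is a genuine simplification here.

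Direction (ii), however, has a real gap in the ``direct'' part. Your $\tilde\pi_I$ construction only shows that a plus irreducible $\pi$ with $td(\pi)=j<k$ is not maximal (granting for the moment the pattern inequality $\pi\le\tilde\pi_I$). It says nothing about the case $td(\pi)=k$ with length $n<3k+1$, and this case is not vacuous: for instance $321$ is plus irreducible of length $3$ with $td(321)=2$, well below $3\cdot 2+1=7$. Your construction applied to such a $\pi$ yields $\tilde\pi_I$ with $td\le k+1$, which is useless. So ``iterating'' cannot force $n=3k+1$; you can only iterate while the distance is strictly below $k$, and then you are stuck with whatever length you have reached. Separately, the parenthetical ``one checks $\pi\le\tilde\pi_I$'' is not routine: $\tilde\pi_I$ is obtained from the inflation $\pi_I$ by a block swap whose cut points lie inside the new strips, and it is not immediate that a copy of $\pi$ survives as a pattern after the swap.

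What actually closes (ii) is precisely the inductive argument you sketch at the end, and this is the paper's proof: one shows by induction on $k$, using Proposition~\ref{plusone}, that $B_k^{(td)}=\bigcup MI(\alpha)$ over a finite set of plus irreducible $\alpha$ of length exactly $3k+1$ with $td(\alpha)=k$ (the breakpoint bound, together with Lemma~\ref{prel} guaranteeing $\tilde\alpha_I$ starts with $1$ and ends with its maximum, forces $td(\tilde\alpha_I)=k+1$ exactly). Once this is established, any generating $\pi$ lies in some $MI(\alpha)$; being plus irreducible forces every inflation coordinate to be $0$ or $1$, so $\pi\le\alpha$, and maximality gives $\pi=\alpha$. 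I would drop the direct attempt at (ii) entirely and lead with this induction; your breakpoint argument for (i) can then replace the paper's second paragraph.
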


\begin{proof}
We start by showing that there exists a finite number $N=N(k)$ of permutations
$\alpha^{(1)},\ldots ,\alpha^{(N)}$ which are plus irreducible, of length $3k+1$
and at distance $k$ from the identity, such that $B_k ^{(td)}=\bigcup_{j=1}^{N}MI(\alpha^{(j)})$.
We can proceed by induction on $k$.
When $k=1$, we have already observed that $B_1 ^{(td)}=MI(1324)$,
and 1324 is plus irreducible, has length 3+1=4 and has distance 1 from the identity 1234.
Now consider a permutation $\overline{\pi}\in B_{k+1} ^{(td)}\setminus B_k ^{(td)}$;
this means, in particular, that there is a permutation $\pi \in B_k ^{(td)}$ such that
$\overline{\pi}$ is obtained from $\pi$ by a single block transposition.
Thus, using the induction hypothesis, we can say that there exists a plus irreducible permutation $\alpha$
of length $3k+1$ and having distance $k$ from the identity such that $\overline{\pi}\in MI(\alpha )^{+1}$.
By Proposition \ref{plusone}, there exists $I\in \mathcal{I}(3k+1)$ such that
$\overline{\pi}\in MI(\tilde{\alpha}_I )$.
Notice that $\mathcal{I}(3k+1)$ is finite and that, by Lemma \ref{prel},
$\tilde{\alpha}_I$ is plus irreducible and has length $3k+4$;
so what remains to prove is that $\tilde{\alpha}_I$ has distance $k+1$ from the identity.
Clearly $td(\tilde{\alpha}_I )\leq k+1$.
On the other hand, since Lemma \ref{prel} implies that $\tilde{\alpha}_I$ starts with 1 and ends with $3k+4$,
recalling that $\tilde{\alpha}_I$ is plus irreducible,
we have that $\tilde{\alpha}_I$ has exactly $3k+3$ breakpoints,
since the only indices that are not breakpoints are $0$ and $3k+4$.
Therefore, denoting with $Br(\pi )$ the number of breakpoints of $\pi$,
since $td(\pi )\geq \left\lceil \frac{Br(\pi )}{3}\right\rceil$ (this follows from an observation in \cite{BaPe2}),
we have that
$$
td(\tilde{\alpha}_I )\geq \left\lceil \frac{Br(\tilde{\alpha}_I )}{3}\right\rceil =\left\lceil \frac{3k+3}{3}\right\rceil=k+1,
$$
as desired.

To conclude the proof we now have to show that any plus irreducible permutation $\gamma$ of length $3k+1$ and
having distance $k$ from the identity is a generating permutation of $B_k ^{(td)}$.
In fact, since $\gamma \in B_k ^{(td)}$, $\gamma$ is the monotone inflation of
some generating permutation $\alpha$ of $B_k ^{(td)}$.
Therefore $\alpha$ is a plus irreducible permutation of length $3k+1$ at distance $k$ from the identity.
So in particular $\gamma$ and $\alpha$ have the same length, which means that necessarily $\gamma =\alpha$.
\end{proof}

The above theorem allows to design a procedure to list the generating set of $B_k ^{(td)}$:
starting from the identity of length $3k+1$, perform repeated monotone inflations as in Proposition \ref{plusone}
(for $k$ times) so to obtain all generating permutations of $B_k ^{(td)}$.
This is similar to the approach used in \cite{HV}.

For instance, when $k=2$, the generating set for $B_2 ^{(td)}$ consists of the eleven permutations
1324657, 1352647, 1354627, 1364257, 1426357, 1436527, 1462537, 1524637, 1536247, 1624357, 1632547.

Notice however that, in this way, it is possible to obtain the same generating permutation several times,
so in the list of permutations given in output by the above procedure one has to remove duplicates.
This is the main reason for which the described approach is not useful for enumerating the generating set.

\bigskip

\textbf{Open problem.}\quad \emph{Enumerate the generating permutations of $B_k ^{(td)}$, for every $k$}.

\bigskip

A very interesting information that we can get on $B_k ^{(td)}$ concerns its basis.
We start by recalling that monotone inflations are particular geometric grid classes \cite{AABRV};
as a consequence, the general theory of geometric grid classes allows us to say that
$B_k ^{(td)}$ is a permutation class having finite basis
(and also that it is \emph{strongly rational}, meaning that its generating function is rational,
together with the generating functions of all of its subclasses).
What we are able to do is to provide a nontrivial upper bound to the length of the basis elements,
which is clearly of great help in effectively computing the basis itself.

\begin{teor}\label{basis}
Every permutation belonging to the basis of $B_k ^{(td)}$ has length at most $3k+1$.
\end{teor}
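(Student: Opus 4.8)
The plan is to take a permutation $\beta$ in the basis of $B_k^{(td)}$ and show $|\beta|\le 3k+1$. I would first record three facts. \emph{(i)} $\beta$ is plus irreducible: otherwise $red(\beta)$ would be a proper pattern of $\beta$ with $td(red(\beta))=td(\beta)\ge k+1$, contradicting that every proper pattern of a basis element lies in $B_k^{(td)}$. \emph{(ii)} If $\sigma\in B_k^{(td)}$ then $|red(\sigma)|\le 3k+1$; for, by the description of $B_k^{(td)}$ as the union of the sets $MI(\pi)$ over the generating permutations $\pi$ (each of length $3k+1$), when $\sigma\in MI(\pi)$ every inflated block of $\sigma$ lies inside a single strip of $\sigma$, so $\sigma$ has at most $|\pi|=3k+1$ strips, i.e.\ $|red(\sigma)|\le 3k+1$; equivalently, every plus irreducible permutation of length $\ge 3k+2$ has transposition distance $\ge k+1$. \emph{(iii)} Inserting a single entry into a permutation raises its transposition distance by at most $1$: run an optimal sorting of the shorter permutation on the longer one, with the inserted entry riding along inside the blocks; what remains afterwards is the identity with one displaced entry, which a single further block transposition sorts. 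Since $\beta\notin B_k^{(td)}$ while every permutation obtained from $\beta$ by deleting one entry lies in $B_k^{(td)}$, fact \emph{(iii)} yields $td(\beta)=k+1$ and $td(\gamma)=k$ for every such one-entry deletion $\gamma$ of $\beta$.

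Suppose now, for contradiction, that $n:=|\beta|\ge 3k+2$. Call a one-entry deletion of $\beta$ \emph{clean} if the resulting permutation is again plus irreducible, i.e.\ if the deletion creates no new strip; then a clean deletion $\gamma$ satisfies $|red(\gamma)|=|\gamma|=n-1$. A clean deletion always exists: if $\beta_1=1$, deleting the first entry is clean (renumbering merely lowers every remaining value by one, creating no strip); if $\beta_1\ne 1$ and the values $\beta_1-1$ and $\beta_1+1$ do not occur consecutively in $\beta$, deleting the first entry is again clean; and if those two values do occur consecutively, a short check (splitting on whether $\beta_2=\beta_1+2$) shows that deleting an appropriate one of the two entries realizing that pattern is clean. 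Now fact \emph{(ii)} applied to $\gamma\in B_k^{(td)}$ gives $n-1=|red(\gamma)|\le 3k+1$, so $n\le 3k+2$, and it remains to exclude $n=3k+2$.

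So assume $n=3k+2$. Then every clean deletion $\gamma$ of $\beta$ has $|red(\gamma)|=3k+1$, so $\gamma=red(\gamma)$ is a plus irreducible permutation of length $3k+1$ lying in $B_k^{(td)}$; having the maximal possible length, it is a generating permutation of $B_k^{(td)}$. By the inductive construction of generating permutations in the proof of Theorem~\ref{genblock} (they arise from $1324$ by iterating $\pi\mapsto\tilde{\pi}_I$) together with Lemma~\ref{prel} (which shows this operation preserves the property of beginning with $1$ and ending with the maximum), every generating permutation of $B_k^{(td)}$ begins with $1$ and ends with $3k+1$; hence so does every clean deletion of $\beta$. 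In particular, if $\beta_1=1$ then deleting the first entry is clean, so its result begins with $\beta_2-1$, which forces $\beta_2=2=\beta_1+1$ and contradicts plus irreducibility; symmetrically $\beta_n\ne n$. Therefore $\beta_1\ne 1$ and $\beta_n\ne n$.

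Deriving a contradiction in this last configuration — $n=3k+2$, $\beta_1\ne 1$, $\beta_n\ne n$ — is, I expect, the main obstacle, and requires a short but delicate case analysis locating the extreme values of $\beta$. The mechanism: deleting $\beta_1$ is clean unless $\beta$ contains the values $\beta_1-1,\beta_1+1$ consecutively in that order (configuration $L$), and symmetrically for $\beta_n$ (configuration $R$). If $L$ fails, cleanness of the deletion of $\beta_1$ together with "the result begins with $1$ and ends with $3k+1$" and $\beta_n\ne n$ forces $\beta_2=1$ and $\beta_1=n$; if $R$ fails it forces $\beta_1=2$; these are incompatible since $n\ge 5$, so $L$ or $R$ holds, say $L$. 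Then deleting the appropriate one of the two entries realizing the pattern $(\beta_1-1)(\beta_1+1)$ is clean, and since its result must begin with $1$ (and, when needed, also end with $3k+1$), tracing the renumbering forces $\beta_1=1$ or a new relation incompatible with $\beta_n\ne n$. This closes the case and completes the proof that every basis element of $B_k^{(td)}$ has length at most $3k+1$.
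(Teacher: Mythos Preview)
Your approach is essentially the paper's: show basis elements are plus irreducible, use a clean one-entry deletion to get the bound $|\beta|\le 3k+2$, and then for $n=3k+2$ exploit that every generating permutation of $B_k^{(td)}$ begins with $1$ and ends with its maximum (Lemma~\ref{prel} plus the construction in Theorem~\ref{genblock}) together with $\beta_1\ne 1$, $\beta_n\ne n$ to reach a contradiction. Two remarks. First, your fact~\emph{(iii)} is unnecessary: for a plus irreducible $\gamma$ of length $3k+1$ the breakpoint bound $td(\gamma)\ge\lceil Br(\gamma)/3\rceil\ge k$ already forces $td(\gamma)=k$, which is how the paper gets that every clean deletion is generating. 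Second, your last paragraph is not a proof but a hedged sketch (``I expect'', ``requires a short but delicate case analysis''); the paper dispatches this step in one sentence, and your outline does in fact close correctly once you note that in configuration~$L$ with the pattern at positions $j,j+1$ one may delete position~$j$ when $j\ge 3$ (forcing $\beta_1=2$ and then $\beta_n=n$) and position~$j+1$ when $j=2$ (forcing $\beta_1=1$), with the case~$R$ handled by the reverse--complement symmetry of~$td$. You should replace the hedging with these two lines.
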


\begin{proof}
We start by observing that, given $\pi$ basis permutation of $B_k ^{(td)}$, if $\pi$ were not plus irreducible,
then necessarily $red(\pi )<\pi$ and we have already observed that $td(\pi )=td (red(\pi ))$;
so $\pi$ would not be minimal among the permutations at distance $k$ from the identity, which is impossible.
Therefore we can assert that all basis permutations of $B_k ^{(td)}$ are plus irreducible.

Now it is easy to prove that every basis permutation has length at most $3k+2$.
Indeed, it is not difficult to show that a plus irreducible permutation of length $m$ contains as a pattern
at least one permutation of length $m-1$ that is plus irreducible as well.
Thus, if $\pi$ were a basis permutation having length greater than $3k+2$, then, in the poset of plus irreducible permutations,
there would exist at least one plus irreducible permutation $\sigma$ of length greater than $3k+1$ such that $\sigma <\pi$.
Since all generating permutations of $B_k ^{(td)}$ have length $3k+1$, $\sigma$ cannot belong to $B_k ^{(td)}$,
which is not possible since $\pi$ is a basis permutation.

Moreover, suppose that $\pi =\pi_1 \pi_2 \cdots \pi_n$ is a permutation in the basis of $B_k ^{(td)}$.
First of all we have that $\pi_1\neq 1$ and $\pi_n \neq n$,
since otherwise we could remove $\pi_1$ or $\pi_n$ thus obtaining a smaller permutation having the same distance from the identity,
against the minimality of $\pi$.
If $\pi$ had length $n=3k+2$, since $\pi$ is plus irreducible, there would exist $\gamma <\pi$ which is plus irreducible as well.
Since $\pi$ is minimal in the complement of $B_k ^{(td)}$, necessarily $td(\gamma )=k$.
This would imply that $\gamma=\gamma_1 \gamma_2 \cdots \gamma_{3k+1}$ is a generating permutation of $B_k ^{(td)}$.
This is however impossible, since it would be $\gamma_1 =1$ and $\gamma_{3k+1}=3k+1$ by Lemma \ref{prel} and the construction showed in Theorem \ref{genblock},
$\pi_1 \neq 1$ and $\pi_{3k+2} \neq 3k+2$ for what we have proved above,
and $\gamma$ is obtained from $\pi$ by removing a single entry.
\end{proof}

The above theorem also suggest a procedure to determine the basis of $B_k ^{(td)}$.
In the poset of plus irreducible permutations,
consider the set of permutations of length $3k+1$ which are not generating.
For each of them (say $\pi$), consider the set of permutation of length $3k$ covered by it:
if all of them are also below some generating permutation of $B_k ^{(td)}$,
then $\pi$ is in the basis of $B_k ^{(td)}$. Otherwise,
just repeat the same procedure starting from the permutations covered by $\pi$ which do not belong to $B_k ^{(td)}$.

As an instance, we have the following result.

\begin{prop}
The basis of $B_1 ^{(td)}$ is $\{ 321,2143,2413,3142\}$.
\end{prop}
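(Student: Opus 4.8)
The plan is to reduce everything to a finite verification. Since $k=1$, Theorem~\ref{basis} guarantees that every element of the basis of $B_1^{(td)}$ has length at most $3k+1=4$, so it is enough to find the minimal permutations of length at most $4$ that lie outside $B_1^{(td)}$. I would use the description of $B_1^{(td)}$ recalled in the excerpt, namely $B_1^{(td)}=MI(1324)$; equivalently, a permutation belongs to $B_1^{(td)}$ if and only if it is the identity or is obtained from the identity by a single block transposition.

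The first step is to deal with $321$. A direct computation shows that the permutations obtained from $123$ by one block transposition are $213,231,312,132$, so $321\notin B_1^{(td)}$, whereas all the length-$2$ patterns of $321$ equal $21\in B_1^{(td)}$; hence $321$ is a minimal element of the complement, i.e.\ a basis element. As a consequence, every remaining basis element must avoid $321$. Moreover, since all permutations of length $\le 2$ lie in $B_1^{(td)}$ and the only permutation of length $3$ outside $B_1^{(td)}$ is $321$, the remaining basis elements all have length exactly $4$ and avoid $321$.

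The second step is the length-$4$ bookkeeping. On the one hand, I would list the elements of $B_1^{(td)}$ of length $4$: the identity $1234$ together with the ten permutations produced by a single block transposition of $1234$, namely $2134,2314,2341,3124,3412,4123,1324,1342,1423,1243$. On the other hand, the $321$-avoiding permutations of length $4$ are the $C_4=14$ permutations $1234,1243,1324,1342,1423,2134,2143,2314,2341,2413,3124,3142,3412,4123$. Comparing the two lists shows that exactly three $321$-avoiding permutations of length $4$ are missing from $B_1^{(td)}$, namely $2143$, $2413$ and $3142$. For each of these I would check minimality in the complement, which is immediate: being $321$-avoiding, all of their length-$3$ patterns avoid $321$ and hence belong to $\{123,132,213,231,312\}\subseteq B_1^{(td)}$, and their shorter patterns $12,21$ are in $B_1^{(td)}$ as well. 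Having examined all permutations of length at most $4$, this yields the basis $\{321,2143,2413,3142\}$.

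I do not expect any genuine obstacle here: once Theorem~\ref{basis} bounds the length of basis elements by $4$, the argument is a short finite case analysis. The only points requiring a little care are verifying that the list of permutations reachable from the identity by a single block transposition is complete (equivalently, computing $B_1^{(td)}$ up to length $4$ from $MI(1324)$) and checking that each of the three candidates of length $4$ is indeed minimal in the complement.
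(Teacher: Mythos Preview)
Your proof is correct and follows essentially the same route as the paper: both invoke Theorem~\ref{basis} to bound basis lengths by $4$ and then perform a finite check, first isolating $321$ at length~$3$ and then identifying $2143,2413,3142$ at length~$4$. Your organization is slightly tidier---after finding $321$ you restrict the length-$4$ search to the fourteen $321$-avoiders and compare against the eleven elements of $B_1^{(td)}(4)$---whereas the paper simply declares the result of a direct computation over all length-$4$ permutations other than $1324$; but this is the same argument, just made more explicit.
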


\begin{proof}
Since $B_1 ^{(td)}=MI(1324)$, we perform the above procedure with all permutations of length 4
except $1324$.
A direct computation shows that the only permutations which cover only elements of $B_1 ^{(td)}$ are precisely
$2143,2413,3142$. Moreover, $321$ is the unique permutation of length 3 which is not in $B_k ^{(td)}$,
and all of its coverings are in $B_k ^{(td)}$, so $321$ is in the basis as well.
\end{proof}

\section{Prefix transposition}

If we restrict the block transposition operation to pairs of blocks such that
the first one is a prefix of the permutation, we obtain the so-called \emph{prefix transposition model}.
It is clearly a special case of the block transposition model and, as such, it is simpler to analyze.
Denoting with $ptd$ the prefix transposition distance, our first goal is to characterize the balls $B_k ^{(ptd)}$
in terms of generating permutations.
As a first example, it is easy to see that $B_1 ^{(ptd)}=MI(213)$.
The approach we use to determine the generating set $B_k ^{(ptd)}$ is slightly different
from the one we have used for the block transposition model.
In the present case, we are able to give an explicit construction of the generating set of $B_{k+1}^{(ptd)}$
starting from the generating set of $B_k ^{(ptd)}$.

\begin{prop}\label{prefixinfl}
Let $\tau \in S_n$ be a generating permutation of $B_k ^{(ptd)}$.
\begin{enumerate}

\item Suppose that $\tau =\pi a \rho b \gamma$, where $a<b\leq n$ and $\pi ,\rho$ and $\gamma$ are the subwords
of $\tau$ determined by such a decomposition. Then the permutation $(a+1)\hat{\rho}(b+1)\hat{\pi}a(b+2)\hat{\gamma}$ is a generating permutation of $B_{k+1}^{(ptd)}$,
where $\hat{\rho}, \hat{\pi}, \hat{\gamma}$ are obtained from $\rho ,\pi, \gamma$ (respectively) by increasing by 1
all the entries between $a$ and $b$ and by increasing by 2 all the entries that are greater than $b$.

\item Suppose that $\tau =\pi a \rho b \gamma$, where $n\geq a>b$ and $\pi ,\rho$ and $\gamma$ are the subwords
of $\tau$ determined by such a decomposition. Then the permutation $(a+2)\hat{\rho}b\hat{\pi}(a+1)(b+1)\hat{\gamma}$ is a generating permutation of $B_{k+1}^{(ptd)}$,
where $\hat{\rho}, \hat{\pi}, \hat{\gamma}$ are obtained from $\rho ,\pi, \gamma$ (respectively) by increasing by 1
all the entries between $b$ and $a$ and by increasing by 2 all the entries that are greater than $a$.

\item Suppose that $\tau =\pi a \rho$, where $a\leq n$ and $\pi$ and $\rho$ are the subwords
of $\tau$ determined by such a decomposition. Then the permutation $(a+1)\hat{\pi}a(a+2)\hat{\rho}$ is a generating permutation of $B_{k+1}^{(ptd)}$,
where $\hat{\pi}, \hat{\rho}$ are obtained from $\pi, \rho$ by increasing by 2 all the entries that are greater than $a$.

\end{enumerate}
\end{prop}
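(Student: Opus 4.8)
The plan is to check, for each of the three constructions, that the permutation $\sigma$ it outputs is (i) plus irreducible, (ii) at prefix transposition distance at most $k+1$ from the identity, and (iii) maximal among the plus irreducible permutations with that property; together these say that $\sigma$ is generating for $B_{k+1}^{(ptd)}$. Throughout I would use the fact, established alongside the characterization of the generating sets, that a generating permutation $\tau$ of $B_k^{(ptd)}$ is plus irreducible, has length $2k+1$, and ends with its maximum element (the base case being $B_1^{(ptd)}=MI(213)$). So fix such a $\tau\in S_n$ with $n=2k+1$, choose one of the three decompositions, and let $\sigma$ be the output. Two facts are then immediate: $|\sigma|=n+2=2(k+1)+1$ (in case~(3) one entry of $\tau$ is blown up into a strip of length $3$, in cases~(1) and~(2) two entries into strips of length $2$), and $\sigma$ ends with its maximum --- its last letter is the shifted last letter of $\tau$ (the maximum of $\tau$), unless the last block of $\tau$ has been absorbed among the new top letters, in which case $\sigma$ ends with $a+2$, $b+2$ or $b+1$ in cases~(3), (1), (2) respectively, again the largest value.

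Plus irreducibility is a finite inspection of the consecutive pairs of $\sigma$. A consecutive pair contained in one of $\hat{\pi},\hat{\rho},\hat{\gamma}$ cannot be an ascent by consecutive values, for undoing the shifts ($-1$ on letters in the relevant interval, $-2$ on larger letters) would carry such an ascent back to one in $\tau$, contradicting that $\tau$, being generating, is plus irreducible. The finitely many seam pairs --- those meeting one of the fresh letters --- are excluded one at a time; e.g.\ in case~(3) the pair formed by $a+2$ and the first letter of $\hat{\rho}$ would force $\rho$ to begin with $a+1$, whence $\tau$ would contain the ascent $a\,(a+1)$.

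For the distance upper bound, the point is that $\sigma$ is exactly the inflation of $\tau$ at the distinguished entry (entries) followed by one extra prefix transposition, so undoing that transposition returns $\sigma$ into $MI(\tau)$. Let $\mu$ be the monotone inflation of $\tau$ obtained by replacing the distinguished entry (entries) by the corresponding strip(s): for instance $\mu=\hat{\pi}\,a\,(a+1)\,(a+2)\,\hat{\rho}$ in case~(3) and $\mu=\hat{\pi}\,a\,(a+1)\,\hat{\rho}\,(b+1)\,(b+2)\,\hat{\gamma}$ in case~(1). Calling $\theta$ the prefix transposition that exchanges the one-letter prefix $a+1$ with the block $\hat{\pi}\,a$ in case~(3) (the prefix $(a+1)\hat{\rho}(b+1)$ with the block $\hat{\pi}\,a$ in case~(1); the prefix $(a+2)\hat{\rho}\,b$ with the block $\hat{\pi}\,(a+1)$ in case~(2)), one verifies directly --- just tracking the shifts --- that $\theta(\sigma)=\mu$, and all three are legitimate prefix transpositions. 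Since $\tau$ is generating for $B_k^{(ptd)}$ we have $MI(\tau)\subseteq B_k^{(ptd)}$ (the prefix analogue of the statement, recalled above for block transpositions, that $B_k^{(td)}$ is the union of the classes $MI(\pi)$ over generating $\pi$, which rests on the $red$-invariance of the distance), hence $ptd(\mu)\le k$ and $ptd(\sigma)\le 1+ptd(\mu)\le k+1$; in particular $\sigma\in B_{k+1}^{(ptd)}$.

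The remaining and principal step is the matching lower bound together with maximality, for which I would use the breakpoint bound for prefix transpositions, the analogue of the inequality $td(\pi)\ge\lceil Br(\pi)/3\rceil$ used in the block case (see \cite{DM}). For $\rho\in S_m$ set $b(\rho)=|\{\,i:1\le i\le m,\ \rho_{i+1}\neq\rho_i+1\,\}|$ with the convention $\rho_{m+1}:=m+1$ --- this is the breakpoint count recalled above with the front position $i=0$ omitted, which is the correct modification since a prefix transposition cuts at the front at no cost and alters only two further adjacencies, so it moves $b$ by at most $2$, whence $ptd(\rho)\ge\lceil b(\rho)/2\rceil$. As $\sigma$ is plus irreducible and ends with its maximum, every position $1\le i\le|\sigma|-1$ is a breakpoint and position $|\sigma|$ is not, so $b(\sigma)=|\sigma|-1=2k+2$ and $ptd(\sigma)\ge k+1$; with the upper bound, $ptd(\sigma)=k+1$. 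For maximality, let $\eta$ be plus irreducible with $\sigma<\eta$; since a plus irreducible permutation of length $m$ has a plus irreducible pattern of length $m-1$ (cf.\ the proof of Theorem~\ref{basis}), there is a plus irreducible $\eta'\le\eta$ of length $|\sigma|+1=2k+4$, and as a plus irreducible permutation of length $m$ satisfies $b\ge m-1$ we get $ptd(\eta')\ge\lceil(2k+3)/2\rceil=k+2$; because $B_{k+1}^{(ptd)}$ is a down-set and $\eta'\le\eta$, it follows that $\eta\notin B_{k+1}^{(ptd)}$. Hence $\sigma$ is a maximal plus irreducible permutation of $B_{k+1}^{(ptd)}$, i.e.\ a generating permutation, of length $2(k+1)+1$ ending with its maximum. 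I expect this last step to be the main obstacle: one has to pin down the breakpoint statistic appropriate to prefix transpositions (with the leading position discarded) and the sharp constant $2$ in the bound, since it is precisely this that fixes the length of a generating permutation of $B_k^{(ptd)}$ at $2k+1$; the case analysis and the shift bookkeeping in the other steps are only tedious.
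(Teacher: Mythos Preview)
Your approach is correct and is in the same spirit as the paper's, but you supply considerably more detail than the authors do. The paper's own proof of this proposition is essentially a one-paragraph formula check: it observes that the prefix model is a special case of the block model, invokes by analogy the ``inflate, then transpose'' construction set up before Lemma~\ref{prel}, and then simply verifies that inflating $a$ and $b$ to strips and exchanging the prefix block $\hat{\pi}a$ with the adjacent block $(a+1)\hat{\rho}(b+1)$ yields the permutation displayed in case~(1). The claims that the result is plus irreducible, has the right distance, and is maximal are not argued in the proof of the proposition; they are deferred to the analogue of Theorem~\ref{genblock} (stated as Theorem~3.2 with proof omitted).

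What you add is precisely the missing verification, and you correctly identify the key tool: the prefix-specific breakpoint bound $ptd(\rho)\ge\lceil b(\rho)/2\rceil$, with the front position discarded because a prefix move always cuts there for free. This is exactly the replacement for the $\lceil Br/3\rceil$ bound that the paper would need to make the ``similar'' proof of Theorem~3.2 go through, and your observation that a prefix transposition alters only the two internal adjacencies (end of $A$ / start of $B$ and end of $B$ / start of $C$) is the right justification.

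One structural point to be aware of: you invoke that a generating permutation of $B_k^{(ptd)}$ has length $2k+1$ and ends with its maximum, facts which in the paper's linear ordering are deduced \emph{from} Proposition~\ref{prefixinfl} (the length by the remark following it, the maximum-ending property inside the sketch of Theorem~3.4). Your framing as a simultaneous induction on $k$ with base case $213$ is a legitimate way to reorganize this, and indeed cleaner than the paper's forward references; just make sure the write-up makes the inductive packaging explicit so the dependence is not circular.
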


\begin{proof} We will give details only for the first case, the remaining two being analogous.
Since the prefix transposition model is a special case of the block transposition one, we have that,
if $\tau$ is a generating permutation for $B_k ^{(ptd)}$, we can construct generating permutations for
$B_{k+1}^{(ptd)}$ by suitably choosing two elements $a$ and $b$ of $\tau$ (possibly the same one),
then suitably inflating them and performing the prefix transposition operation which exchanges the prefix block ending with $a$
with the adjacent block ending with $b$. This is done in analogy with the construction described before Lemma \ref{prel}.

If $a<b$ and $a$ precedes $b$ in $\tau$, then we can decompose $\tau$ as $\tau =\pi a \rho b \gamma$.
After inflating $a$ and $b$, we thus get the permutation $\hat{\pi}a(a+1)\hat{\rho}(b+1)(b+2)\hat{\gamma}$,
where the elements of $\pi , \rho$ and $\gamma$ have been renamed, namely all entries greater than $a$ and smaller than $b$ have been
increased by 1 and all entries greater than $b$ have been increased by 2.
We can now perform the desired prefix transposition, which exchanges the prefix block $\hat{\pi}a$ with the adjacent block $(a+1)\hat{\rho}(b+1)$,
thus obtaining the predicted permutation.
\end{proof}

The above proposition gives a recipe for constructing generating permutations of $B_{k+1} ^{(ptd)}$ starting from those of $B_k ^{(ptd)}$.
Notice that, if $\tau$ has length $m$, then the permutations obtained with the previous construction have length $m+2$.
Since we have seen that $B_1 ^{(ptd)}=MI(213)$, a simple inductive argument shows that
the generating permutations of $B_k ^{(ptd)}$ we have produced all have length $2k+1$.
Actually, we have something stronger, which is the analogous of Theorem \ref{genblock} in the case of the prefix transposition model.
Since the proof is similar, we just give the statement.

\begin{teor}
For every $k\geq 1$, the generating set of $B_k ^{(ptd)}$ is the set of all plus irreducible permutations
of length $2k+1$ and having distance $k$ from the identity.
\end{teor}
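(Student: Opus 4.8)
\medskip
\noindent\textbf{Proof proposal.}\quad
The plan is to transcribe the proof of Theorem~\ref{genblock}, replacing block transpositions by prefix transpositions, Proposition~\ref{plusone} by Proposition~\ref{prefixinfl}, and the lower bound $td(\pi)\ge\lceil Br(\pi)/3\rceil$ by a sharper ``prefix'' version described below. One argues by induction on $k$. For $k=1$ we have $B_1^{(ptd)}=MI(213)$, and $213$ is plus irreducible, of length $2\cdot 1+1=3$, at distance $1$ from $id_3$; moreover $213$ is the unique plus irreducible permutation of length $3$ at distance $1$, and $B_1^{(ptd)}=\bigcup\{MI(\tau)\mid\tau\ \text{generating for}\ B_1^{(ptd)}\}$. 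The inductive hypothesis is the full statement at level $k$, equivalently the decomposition of $B_k^{(ptd)}$ as the finite union of the sets $MI(\tau)$, with $\tau$ ranging over the plus irreducible permutations of length $2k+1$ at distance $k$ from the identity.

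For the inductive step, take $\overline{\pi}\in B_{k+1}^{(ptd)}\setminus B_k^{(ptd)}$. Then $\overline{\pi}$ is obtained by a single prefix transposition from some $\pi\in B_k^{(ptd)}$, and by the inductive hypothesis $\pi\in MI(\tau)$ for a generating permutation $\tau$ of $B_k^{(ptd)}$. Exactly as in the block case --- by running the case analysis in the proof of Proposition~\ref{prefixinfl} on an arbitrary monotone inflation of $\tau$ --- one obtains the analogue of Proposition~\ref{plusone}: applying one prefix transposition to a permutation of $MI(\tau)$ always produces a permutation of $MI(\sigma)$ for one of the finitely many permutations $\sigma$ built from $\tau$ by the three constructions of Proposition~\ref{prefixinfl}. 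By Proposition~\ref{prefixinfl} each such $\sigma$ is generating for $B_{k+1}^{(ptd)}$, hence in particular plus irreducible, and by the length remark following that proposition it has length $2(k+1)+1=2k+3$. Moreover, deleting from $\sigma$ the two entries introduced by the inflation gives back $\tau$ (the prefix transposition only rearranges blocks of entries), so $\tau\in MI(\sigma)$, whence $MI(\tau)\subseteq MI(\sigma)$; combining this with the inductive hypothesis yields $B_{k+1}^{(ptd)}=\bigcup_\sigma MI(\sigma)$, a finite union of monotone inflations of plus irreducible permutations of length $2k+3$.

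The only genuinely new ingredient is the value $ptd(\sigma)$. The bound $ptd(\sigma)\le k+1$ is immediate from the construction. For the reverse inequality I would use the following analogue of $td(\pi)\ge\lceil Br(\pi)/3\rceil$. A prefix transposition turning $\pi_1\cdots\pi_{j-1}\pi_j\cdots\pi_{k-1}\pi_k\cdots\pi_n$ into $\pi_j\cdots\pi_{k-1}\pi_1\cdots\pi_{j-1}\pi_k\cdots\pi_n$ creates or destroys only three adjacent pairs of entries: the one at the front of the permutation, and the (at most) two sitting at the internal block junctions of the new word; consequently a single prefix transposition removes at most two \emph{internal} breakpoints, meaning breakpoints located at a position in $\{1,\dots,n-1\}$. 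Since $id_n$ has no internal breakpoint, this gives $ptd(\pi)\ge\lceil b^{\mathrm{in}}(\pi)/2\rceil$, where $b^{\mathrm{in}}(\pi)$ is the number of internal breakpoints of $\pi$. As $\sigma$ is plus irreducible of length $2k+3$, all of its $2k+2$ internal positions are breakpoints, so $ptd(\sigma)\ge\lceil(2k+2)/2\rceil=k+1$, and therefore $ptd(\sigma)=k+1$. I expect this refined bound to be the crux: the crude bound $\lceil Br/3\rceil$ inherited from the block model only gives $ptd(\sigma)\ge\lceil(2k+1)/3\rceil$, which is far too weak, so one genuinely has to exploit that a prefix transposition ``wastes'' one of its three modified adjacencies at the front.

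It remains to close the induction, exactly as in Theorem~\ref{genblock}. From $B_{k+1}^{(ptd)}=\bigcup_\sigma MI(\sigma)$ and the fact that $B_{k+1}^{(ptd)}$ is the union of the sets $MI(\beta)$ over its generating permutations $\beta$ (the easy $ptd$-analogue of the first Proposition of Section~2), any generating permutation $\beta$ of $B_{k+1}^{(ptd)}$ lies in some $MI(\sigma)$; being plus irreducible it is then a pattern of $\sigma$, and being maximal plus irreducible in $B_{k+1}^{(ptd)}$ (which contains $\sigma$) it must coincide with $\sigma$. Hence every generating permutation of $B_{k+1}^{(ptd)}$ is plus irreducible of length $2k+3$ at distance $k+1$. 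Conversely, if $\gamma$ is plus irreducible of length $2k+3$ at distance $k+1$, then $\gamma\in B_{k+1}^{(ptd)}$ is a monotone inflation of some generating permutation $\alpha$ of $B_{k+1}^{(ptd)}$; by what we just showed $\alpha$ has length $2k+3$, so $\gamma=\alpha$ and $\gamma$ is generating. This establishes the statement at level $k+1$ and completes the induction.
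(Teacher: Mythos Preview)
Your approach is exactly the one the paper intends: it says the proof is ``similar'' to that of Theorem~\ref{genblock}, and your transcription is faithful. In particular, your identification of the sharper lower bound $ptd(\pi)\ge\lceil b^{\mathrm{in}}(\pi)/2\rceil$ as the crucial new ingredient is right; the observation that a prefix transposition alters at most two internal adjacencies (so can destroy at most two internal breakpoints) is correct and is essentially the Dias--Meidanis bound from \cite{DM}. This is precisely what replaces the $\lceil Br/3\rceil$ bound used in Theorem~\ref{genblock}.

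There is, however, one concrete slip. You write that ``deleting from $\sigma$ the two entries introduced by the inflation gives back $\tau$'', and conclude $\tau\in MI(\sigma)$, hence $MI(\tau)\subseteq MI(\sigma)$. This is false: the prefix transposition has rearranged the blocks of the inflation, so deleting the two new entries from $\sigma$ yields the \emph{prefix transposition applied to $\tau$}, not $\tau$. For instance, in case~1 of Proposition~\ref{prefixinfl}, deleting $a{+}1$ and $b{+}2$ from $\sigma=(a{+}1)\hat\rho(b{+}1)\hat\pi\,a\,(b{+}2)\hat\gamma$ leaves, after relabelling, $\rho\,b\,\pi\,a\,\gamma\neq\tau=\pi\,a\,\rho\,b\,\gamma$. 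Fortunately this step is unnecessary. The case $\overline{\pi}\in B_k^{(ptd)}$ is handled by the very same mechanism as $\overline{\pi}\in B_{k+1}^{(ptd)}\setminus B_k^{(ptd)}$: any nonidentity $\overline{\pi}\in B_{k+1}^{(ptd)}$ is one prefix transposition away from some $\pi$ with $ptd(\pi)=ptd(\overline{\pi})-1\le k$, so $\pi\in B_k^{(ptd)}$ and the analogue of Proposition~\ref{plusone} applies; the identity lies in every $MI(\sigma)$ trivially. (This is also how the apparent gap in the paper's own proof of Theorem~\ref{genblock}, which likewise only treats $\overline{\pi}\in B_{k+1}\setminus B_k$ explicitly, is implicitly closed.) So simply run your main argument for every nonidentity $\overline{\pi}\in B_{k+1}^{(ptd)}$ and drop the erroneous inclusion.
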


However, in this case we can also enumerate the generating sets.

\begin{teor}
The generating set of $B_k ^{(ptd)}$ has cardinality $(2k)!/2^k$.
\end{teor}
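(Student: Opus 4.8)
The plan is to count the generating permutations of $B_k^{(ptd)}$ by analyzing the construction of Proposition \ref{prefixinfl} and showing that it produces each generating permutation of $B_{k+1}^{(ptd)}$ exactly once, so that the recurrence $g_{k+1} = (2k+1)(2k+2)\, g_k$ holds, where $g_k$ denotes the cardinality of the generating set of $B_k^{(ptd)}$. Since $g_1 = |\{213\}| = 1 = 2!/2^1$, solving this recurrence with $g_k = (2k+1)(2k+2)\cdots$... — more precisely, $g_{k+1}/g_k = (2k+1)(2k+2)$ combined with $(2(k+1))!/2^{k+1} = (2k+2)!/2^{k+1} = (2k+1)(2k+2)\cdot (2k)!/2^k$ — immediately yields $g_k = (2k)!/2^k$ by induction. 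So the entire content reduces to proving that the three cases of Proposition \ref{prefixinfl}, ranging over all admissible choices of the pair of positions (or single position) inside a generating permutation $\tau$ of length $2k+1$, enumerate the $g_{k+1}$ generating permutations of $B_{k+1}^{(ptd)}$ bijectively.

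First I would make the parameter count explicit. A generating permutation $\tau \in S_{2k+1}$ has $2k+1$ entries; the construction selects an ordered datum consisting of two (not necessarily distinct) entries $a, b$, but the three cases split according to whether $a$ precedes $b$ with $a<b$ (case 1), $a$ precedes $b$ with $a>b$ (case 2), or $a=b$ (case 3). Choosing two distinct positions $i<j$ in $\tau$ contributes to case 1 or case 2 depending on the relative order of the values $\tau_i,\tau_j$; in either case each unordered pair of positions gives exactly one output, for a total of $\binom{2k+1}{2} = \frac{(2k+1)(2k)}{2}$ outputs. Choosing a single position (case 3) gives $2k+1$ further outputs. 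The grand total is $\frac{(2k+1)(2k)}{2} + (2k+1) = (2k+1)\cdot\frac{2k+2}{2} = (2k+1)(k+1)$. Summed over the $g_k$ generating permutations $\tau$, this gives $(2k+1)(k+1)\, g_k$ candidate outputs — but the target count is $(2k+1)(2k+2)\, g_k = 2(2k+1)(k+1)\, g_k$, i.e. exactly twice as many. So the counting will \emph{not} come out right unless either each generating permutation of $B_{k+1}^{(ptd)}$ is hit exactly twice, or I have miscounted the admissible data. I would resolve this by re-examining case 3: a strip of length $4$ obtained by inflating a single entry admits, after the appropriate prefix transposition, possibly two inequivalent ways of breaking it (analogous to the "two of the indices equal / all equal" dichotomy in the block-transposition construction before Lemma \ref{prel}), or alternatively by noting that the inflate-one-entry operation should be counted with multiplicity matching the self-paired case $i=j$ of an ordered pair, giving $2(2k+1)$ rather than $2k+1$; either way the corrected admissible count becomes $(2k+1)(2k) /1$... — the right bookkeeping is that ordered pairs $(i,j)$ with $i\le j$, where $i=j$ is weighted by $2$, yield $\binom{2k+1}{2} + 2(2k+1) = (2k+1)(2k+2)/... $ hmm, $= (2k+1)k + 2(2k+1) = (2k+1)(k+2)$, still not $2(2k+1)(k+1)$. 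The cleanest reconciliation, and the one I would pursue, is that case~1 and case~2 should each be counted over \emph{ordered} selections (which position plays the role of the prefix-ending element versus the block-ending element is forced by which comes first, but the two cases $a<b$ and $a>b$ are genuinely the two orderings of the value pair), so distinct unordered position pairs already exhaust both, while case 3 must be the $2(2k+1)$ self-paired count; then the total is $2\binom{2k+1}{2}$... I will settle this arithmetic against the small case $k=1$: generating set of $B_2^{(ptd)}$ should have $4!/4 = 6$ elements, and from $\tau=213$ the construction must produce exactly $6$ distinct plus irreducible permutations of length $5$ at distance $2$, which gives a direct check of which weighting is correct.

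With the admissible-data count pinned down to equal $(2k+1)(2k+2)$ per generating permutation, the remaining work is \emph{injectivity}: different choices of the datum $(\tau; a, b)$ must yield different generating permutations of $B_{k+1}^{(ptd)}$. The key is that the construction is essentially invertible — given a generating permutation $\sigma$ of $B_{k+1}^{(ptd)}$ of length $2k+3$, one recovers $\tau$, $a$, and $b$ by identifying the unique prefix transposition $\tau_I$ (in the sense of the text preceding Lemma \ref{prel}) that, applied to $\sigma$, creates strips whose reduction is a shorter plus irreducible permutation at distance $k$; the positions where the strips are created pinpoint $a$ and $b$, and deflating them recovers $\tau$. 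Surjectivity onto the generating set of $B_{k+1}^{(ptd)}$ follows from the already-quoted theorem characterizing the generating set as \emph{all} plus irreducible permutations of length $2k+3$ at distance $k+1$, together with the argument (as in the proof of Theorem \ref{genblock}, via breakpoints and the bound $ptd(\pi) \ge \lceil Br(\pi)/2 \rceil$ for prefix transpositions) that every such permutation arises from exactly one shorter generating permutation by a single inflation-and-transposition step.

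The main obstacle I anticipate is precisely the factor-of-two bookkeeping in the above paragraph: making the three cases of Proposition \ref{prefixinfl} into a genuine partition of the admissible data with the correct multiplicities, and verifying that no two cases (especially case 3 versus a degenerate instance of case 1 or case 2) overlap. Once the map from data to outputs is shown to be a bijection onto the generating set, the enumeration is immediate: $g_{k+1} = (2k+1)(2k+2)\, g_k$, hence by induction from $g_1 = 1 = 2!/2$ we obtain $g_k = (2k)!/2^k$, using $(2k+2)!/2^{k+1} = (2k+1)(2k+2)\cdot (2k)!/2^k$.
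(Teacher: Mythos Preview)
Your overall strategy matches the paper's exactly: show that the constructions of Proposition~\ref{prefixinfl} give a bijection from pairs (generating permutation of $B_k^{(ptd)}$, choice of inflation data) to generating permutations of $B_{k+1}^{(ptd)}$, deduce a recurrence for $g_k$, and solve it. However, you introduce a spurious factor-of-two problem through an arithmetic slip, and then spend most of the proposal trying to repair a discrepancy that is not there.

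Concretely: in your first paragraph you assert $(2k+2)!/2^{k+1} = (2k+1)(2k+2)\cdot (2k)!/2^k$. This is wrong by a factor of $2$; the correct identity is
\[
\frac{(2k+2)!}{2^{k+1}} \;=\; \frac{(2k+1)(2k+2)}{2}\cdot \frac{(2k)!}{2^{k}} \;=\; (2k+1)(k+1)\cdot \frac{(2k)!}{2^{k}}.
\]
So the target ratio is $g_{k+1}/g_k = (2k+1)(k+1) = \binom{2k+2}{2}$, which is \emph{exactly} what your first count produced: $\binom{2k+1}{2}$ ordered pairs of distinct positions (cases 1 and 2) plus $2k+1$ single positions (case 3). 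Your sanity check against $k=1$ would have confirmed this immediately: from $\tau=213$ one must obtain $6 = 4!/4$ permutations, and indeed $\binom{3}{2}+3 = 6$. All the subsequent attempts to double case~3, switch to ordered pairs, etc., are unnecessary and incorrect. The paper states this count directly as ``the number of multisets of cardinality~2 of a set of cardinality $2k+1$, i.e.\ $\binom{2k+2}{2}$''.

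For the injectivity part, your sketch (``the construction is essentially invertible'') is in the right spirit but vague. The paper's argument is short and concrete: given $\sigma$ in the generating set of $B_{k+1}^{(ptd)}$, look at $\sigma_1$ (which is never $1$) and locate $\sigma_1-1$ inside $\sigma$; the value immediately to the right of $\sigma_1-1$ is either $\ge \sigma_1+2$, $\le \sigma_1-2$, or $=\sigma_1+1$, and these three mutually exclusive possibilities pin down which of the three cases of Proposition~\ref{prefixinfl} produced $\sigma$ and with what data. That is the missing mechanism in your proposal.
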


\begin{proof} We observe that, if $\sigma =\sigma_1 \sigma_2 \cdots \sigma_{2k+3}$ is a generating permutation for $B_{k+1}^{(ptd)}$,
then it has been obtained from a generating permutation of $B_k ^{(ptd)}$ by one of the construction described in Proposition \ref{prefixinfl}.
However, $\sigma$ cannot be obtained in two different ways.
This can be shown by considering the elements $\sigma_1$ and $\sigma_1 -1$
(notice that, in this model, a generating permutation cannot start with 1).
\begin{enumerate}
\item If the element on the right of $\sigma_1 -1$ in $\sigma$ is larger than or equal to $\sigma_1 +2$,
then $\sigma$ is constructed as in 1 of Proposition \ref{prefixinfl}.
\item If the element on the right of $\sigma_1 -1$ in $\sigma$ is smaller than or equal to $\sigma_1 -2$,
then $\sigma$ is constructed as in 2 of Proposition \ref{prefixinfl}.
\item If the element on the right of $\sigma_1 -1$ in $\sigma$ is equal to $\sigma_1 +1$,
then $\sigma$ is constructed as in 3 of Proposition \ref{prefixinfl}.
\end{enumerate}

Since the three above cases are disjoint, we can conclude that $\sigma$ comes from a unique generating permutation of $B_k ^{(ptd)}$
through the construction of Proposition \ref{prefixinfl}.
Thus, the total number of generating permutations of $B_{k+1}^{(ptd)}$ is obtained by
multiplying the number of generating permutations of $B_k ^{(ptd)}$ by the number of possible inflations of each of them,
which is equal to the number of multisets of cardinality 2 of a set of cardinality $2k+1$, i.e. ${2k+2\choose 2}$.
Since the generating set of $B_1 ^{(ptd)}$ has cardinality $1={2\choose 2}$, a simple inductive argument shows that
the required cardinality is indeed equal to $\prod_{i=1}^{k}{2i\choose i}=(2k)!/2^k$.
\end{proof}

We have already observed that, for $k=1$, the generating set is $\{ 213\}$.
For $k=2$, the generating set is $\{ 32415, 41325, 31425, 24135, 24315,  42135\}$.

Concerning the basis of $B_k ^{(ptd)}$, we have been able to prove the analogue of Theorem \ref{basis},
however the proof is slightly more complicated, so we cannot reproduce it entirely here, due to limited space.

\begin{teor}
Every permutation belonging to the basis of $B_k ^{(ptd)}$ has length at most $2k+1$.
\end{teor}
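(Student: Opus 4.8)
The plan is to mirror the structure of the proof of Theorem~\ref{basis}, keeping track of the slightly different parameters of the prefix transposition model. As in the block case, the first observation is that every basis permutation of $B_k^{(ptd)}$ must be plus irreducible: if $\pi$ were not, then $red(\pi)<\pi$ and, since $ptd(\pi)=ptd(red(\pi))$ (the same argument as for $td$ works, because breaking a strip is never useful), $\pi$ would fail to be minimal in the complement of $B_k^{(ptd)}$. So we may restrict attention to the poset of plus irreducible permutations.

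The second step is to establish the weaker bound: every basis permutation has length at most $2k+2$. Here I would reuse the combinatorial fact, already invoked in the block case, that every plus irreducible permutation of length $m$ contains a plus irreducible pattern of length $m-1$. Since all generating permutations of $B_k^{(ptd)}$ have length exactly $2k+1$ (by the analogue of Theorem~\ref{genblock} just stated), a plus irreducible permutation of length $>2k+1$ cannot lie in $B_k^{(ptd)}$; hence if $\pi$ had length $>2k+2$ it would cover a plus irreducible $\sigma$ of length $>2k+1$ with $\sigma\notin B_k^{(ptd)}$, contradicting minimality of $\pi$.

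The crux is then to rule out the length $2k+2$ case. For a basis permutation $\pi=\pi_1\cdots\pi_{2k+2}$ one first notes, exactly as before, that $\pi_1\ne 1$ and $\pi_{2k+2}\ne 2k+2$ (otherwise delete the first or last entry). By minimality, any plus irreducible $\gamma<\pi$ has $ptd(\gamma)=k$, hence — length being $2k+1$ — $\gamma$ is a generating permutation of $B_k^{(ptd)}$. Now here is the point where the prefix model behaves differently from the block model: by Lemma~\ref{prel} a generating permutation of $B_k^{(td)}$ both starts with $1$ and ends with its largest value, but the construction in Proposition~\ref{prefixinfl} shows that a generating permutation of $B_k^{(ptd)}$ need \emph{not} start with $1$ (indeed it was explicitly remarked that it never does, for $k\ge 1$). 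So the slick ``$\gamma_1=1$ contradicts $\pi_1\ne 1$'' argument is unavailable, and this is the main obstacle. What survives is the right-endpoint constraint: the constructions of Proposition~\ref{prefixinfl} all terminate in $\hat\gamma$ with $\gamma$ possibly empty, and one checks that a generating permutation of $B_k^{(ptd)}$ ends with its largest entry $2k+1$ (this follows inductively: each construction appends $\hat\gamma$, and $b+2$ resp.\ $a+1,b+1,b+1$, resp.\ $a+2$ are placed so that the maximum stays rightmost when $\gamma$ is empty, while otherwise $\hat\gamma$ already ends with the previous maximum suitably shifted). Thus every plus irreducible $\gamma<\pi$ of length $2k+1$ ends with $2k+1$. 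But $\pi$ has $2k+2$ entries with $\pi_{2k+2}\ne 2k+2$, and deleting any single entry other than $\pi_{2k+2}$ produces a permutation whose last entry is $\pi_{2k+2}\ne 2k+1$ after rescaling only if $\pi_{2k+2}<2k+1$; a careful case analysis — deleting the entry $2k+2$ versus deleting $\pi_{2k+2}$ versus deleting some interior entry, combined with the location of the value $2k+1$ in $\pi$ — shows that in no case can the resulting length-$(2k+1)$ plus irreducible pattern simultaneously end with its maximum and be obtainable from $\pi$, so no such $\gamma$ exists, contradicting that $\pi$ covers some plus irreducible permutation below it. I expect the bookkeeping in this final case analysis, together with pinning down precisely which extremal entries a generating permutation of $B_k^{(ptd)}$ must have, to be the part requiring the most care; everything else transfers verbatim from Theorem~\ref{basis}.
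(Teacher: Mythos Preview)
Your overall architecture is right --- reduce to plus irreducible permutations, get the weak bound $2k+2$, then eliminate length $2k+2$ using the fact that generating permutations of $B_k^{(ptd)}$ end with their maximum --- and this is exactly the paper's route. But the last step, where you derive the contradiction, is aimed at the wrong target and would not go through.

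You assert that a ``careful case analysis'' shows that \emph{no} plus irreducible $\gamma<\pi$ of length $2k+1$ can end with its maximum, and then derive a contradiction from the fact that some such $\gamma$ must exist. This is false. If $\pi_{2k+2}=2k+1$ and you delete the value $2k+2$ from $\pi$, the resulting permutation ends with $2k+1$, its maximum, and it may perfectly well be plus irreducible (the extreme instance $2\,4\,6\cdots(2k+2)\,1\,3\,5\cdots(2k+1)$ already exhibits this). Symmetrically, if $\pi_{2k+1}=2k+2$ and you delete $\pi_{2k+2}$, the reduced permutation ends with its maximum. So such $\gamma$ \emph{can} exist; your claimed case analysis cannot succeed as stated. (A smaller slip: you carry over ``$\pi_1\ne 1$'' from the block case, but in the prefix model removing a leading $1$ does \emph{not} preserve the distance --- indeed $132$ is in the basis of $B_1^{(ptd)}$ --- so that step is unjustified. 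The paper only uses $\pi_n\ne n$.)

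The paper's contradiction runs in the opposite direction. The existence of a plus irreducible $\gamma<\pi$ ending with its maximum forces $\pi$ into one of two shapes: either $\pi_{2k+2}=2k+1$ (and $\gamma$ came from deleting the value $2k+2$), or $\pi_{2k+1}=2k+2$ (and $\gamma$ came from deleting the last entry). One then looks for a \emph{second} deletion that yields a plus irreducible permutation \emph{not} ending with its maximum; that permutation would again have to be generating, which is impossible. The work is in showing such a second deletion always exists: typically deleting $\pi_{2k+2}$ (or the value $2k{+}2$, in the symmetric case) works, and when it does not one chases a short cascade of forced adjacencies, with the single exceptional permutation $2\,4\,6\cdots(2k+2)\,1\,3\,5\cdots(2k+1)$ handled by deleting the value $1$ instead. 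That cascade argument is the missing idea.
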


\begin{proof} {\em (sketch).}\quad The fact that
the permutations in the basis of $B_k ^{(ptd)}$ must all have length at most $2k+2$
can be proved in a similar way as the first part of Theorem \ref{basis}.

Now suppose that $\pi =\pi_1 \pi_2 \cdots \pi_{2k+2}$ is a basis permutation for $B_k ^{(ptd)}$ of length $2k+2$,
and set $n=2k+2$.
Then it can be shown that $\pi$ has to be plus irreducible and that $\pi_{n}\neq n$,
i.e. the last element of $\pi$ is not its maximum.
From a previous observation, we know that it is possible to remove one element of $\pi$ in such a way that
the resulting permutation $\gamma= \gamma_1 \gamma_2 \cdots \gamma_{n-1}$ of length $n-1$ is plus irreducible.
However, since $\pi$ belongs to the basis of $B_k ^{(ptd)}$,
$\gamma$ has to be a generating permutation of $B_k ^{(ptd)}$.
Since it is possible to prove that the last element of any generating permutation of $B_k ^{(ptd)}$ is its maximum,
there are only two possibilities:
either the last element of $\pi$ is $n-1$ and $\gamma$ is obtained by removing $n$,
or the second-to-last element of $\pi$ is $n$ and $\gamma$ is obtained by removing the last element.

Since the two cases are symmetric in a well precise sense, we just consider the first one.
Our goal is now to show that we can remove another element from $\pi$ (different from $n$)
and obtain another plus irreducible permutation, which turns out to be a generating permutation:
this is however impossible, since it does not ends with its maximum.
In many cases, if we remove the last element $n-1$ of $\pi$, we do obtain a plus irreducible permutation.
The only cases in which this does not work are those in which $n-2$ is immediately before $n$ in $\pi$.
In such cases, if we remove $n-2$, we indeed get a plus irreducible permutation,
unless $n-3$ is immediately before $n-1$ in $\pi$.
By repeating this argument, we find that we are always able to remove an element different from $n$
and obtain a plus irreducible permutation, except for the permutation $\sigma= 2468\cdots n1357\cdots (n-1)$
(recall that $n=2k+2$, so $n$ is even).
Also in this last case, however, we can remove 1 from $\sigma$
and the permutation thus obtained is easily seen to be plus irreducible.
\end{proof}

Thanks to the above bound, we are able also in this case to compute the basis for small values of $k$.

\begin{prop}

\begin{enumerate}
\item The basis of $B_1 ^{(ptd)}$ is $\{ 132,321\}$.
\item The basis of $B_2 ^{(ptd)}$ consists of three permutations of length 4, namely 1432, 2143, 4321,
and fifteen permutations of length 5, namely
13524, 14253, 24351, 25314, 25413, 35142, 35214, 35241, 41352, 42513, 42531, 43152, 51324, 52413, 53142.
\end{enumerate}

\end{prop}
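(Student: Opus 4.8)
The plan is to reduce both parts to a finite check, using three facts from the preceding material: every basis element of $B_k^{(ptd)}$ has length at most $2k+1$ (the preceding theorem); every basis element is plus irreducible (by the argument in the proof of Theorem \ref{basis}, together with the fact -- true for $ptd$ just as for $td$ -- that $ptd(\pi)=ptd(red(\pi))$); and the explicit descriptions $B_1^{(ptd)}=MI(213)$ and, by the prefix-model analogue of the identity $B_k^{(td)}=\bigcup\{MI(\pi)\mid\pi\text{ generating}\}$, $B_2^{(ptd)}=\bigcup\{MI(\tau)\}$ over the six generating permutations $32415$, $41325$, $31425$, $24135$, $24315$, $42135$. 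Membership of a short permutation in $B_k^{(ptd)}$ is decidable directly, since $B_k^{(ptd)}\cap S_n$ consists of the permutations reachable from $id_n$ in at most $k$ prefix transpositions; so everything below is a finite computation.

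For $k=1$ the bound forces length at most $3$. Every permutation of length at most $2$ lies in $B_1^{(ptd)}$, and the permutations obtained from $id_3$ by a single prefix transposition are exactly $213$, $231$, $312$; hence the permutations of length $3$ outside $B_1^{(ptd)}$ are precisely $132$ and $321$. Both are minimal outside $B_1^{(ptd)}$, since their proper patterns have length at most $2$, and by the length bound there is no minimal excluded permutation of length at least $4$; hence the basis of $B_1^{(ptd)}$ is $\{132,321\}$. (As expected, this is also the basis of the geometric grid class $MI(213)=Geom(M_{213})$.)

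For $k=2$ the bound forces length at most $5$, and since all of $S_3$ is contained in $B_2^{(ptd)}$ there is no basis element of length at most $3$. A plus irreducible permutation of length $4$ not lying in $B_2^{(ptd)}$ is automatically minimal outside it, as all its proper patterns have length at most $3$; testing the $f_3=11$ plus irreducible permutations of length $4$ leaves exactly $1432$, $2143$, $4321$. For the length-$5$ basis elements I would run the procedure sketched after Theorem \ref{basis} in its prefix-model form. A plus irreducible permutation of length $2k+1$ lies in $B_k^{(ptd)}$ only if it is generating: otherwise it would be strictly below a generating permutation in the poset of plus irreducible permutations -- the generating permutations being, by definition, the maximal plus irreducible permutations of the ball -- which is impossible by length. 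Hence the $f_4-6=47$ non-generating plus irreducible permutations of length $5$ all lie outside $B_2^{(ptd)}$; for each of them one inspects its covers in the plus irreducible poset (the plus irreducible permutations of length $4$ obtained by a single deletion), and the permutation is a basis element exactly when every such cover lies in $B_2^{(ptd)}$ -- equivalently, exactly when it avoids $1432$, $2143$ and $4321$. Carrying this out produces precisely the fifteen permutations listed.

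All the substance is in the finite computation of the previous paragraph: deciding membership in $B_2^{(ptd)}$ for the permutations of length $4$ and $5$ that occur, and then the bookkeeping needed to confirm both that each of the eighteen listed permutations is minimal outside $B_2^{(ptd)}$ and that no other permutation of length at most $5$ has this property. No new idea is needed beyond the quoted results; the step requiring the most care is the verification, for each of the fifteen length-$5$ candidates, that all of its length-$4$ patterns already belong to $B_2^{(ptd)}$ (equivalently, to one of the six grid classes $Geom(M_\tau)$).
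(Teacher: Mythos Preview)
Your proposal is correct and follows exactly the approach the paper intends: the paper gives no proof of this proposition, merely presenting it as the outcome of the finite computation enabled by the length bound $2k+1$ and the procedure described after Theorem~\ref{basis}. Your write-up spells out that computation carefully (including the useful observation that non-plus-irreducible permutations of length $4$ are automatically in $B_2^{(ptd)}$, so only the plus irreducible length-$4$ patterns need checking), which is precisely what is implicit in the paper.
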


\subsubsection*{Acknowledgements}

Both authors are members of the INdAM Research group GNCS; they are partially supported by INdAM - GNCS 2018 project ``Propriet\'a combinatorie e rilevamento di pattern in strutture discrete lineari e bidimensionali" and by a grant of the "Fondazione della Cassa di Risparmio di Firenze" for the project "Rilevamento di pattern: applicazioni a memorizzazione basata sul DNA, evoluzione del genoma, scelta sociale".


\end{document}